\newtheorem{theorem}{Theorem}[section]
\DeclareSymbolFont{cyrletters}{OT2}{wncyr}{m}{n}\DeclareMathSymbol{\Sha}{\mathalpha}{cyrletters}{"58}
\renewcommand{\phi}{{\varphi}}
\newcommand{\Fp}{\mathbf{F}_p}
\renewcommand{\geq}{\geqslant}
\newcommand{\smat}[1]{\left( \begin{smallmatrix} #1 \end{smallmatrix} \right)}
\newcommand{\links}{\left(\begin{array}{cc}}
\newcommand{\rechts}{\end{array}\right)}
\newcommand{\bai}{\left[\begin{array}{cc}}
\newcommand{\dai}{\end{array}\right]}
\newcommand{\hidari}{\left(\begin{array}{c}}
\newcommand{\migi}{\end{array}\right)}
\newcommand{\C}{\mathbb{C}}
\newcommand{\Q}{\mathbb{Q}}
\newcommand{\Z}{\mathbb{Z}}
\newcommand{\Log}{\mathcal Log}
\newcommand{\tensor}{\otimes} 
\newtheorem{auxiliary proposition}[theorem]{Auxiliary Proposition}
\newtheorem{corollary}[theorem]{Corollary}
\newtheorem{definition}[theorem]{Definition}
\newtheorem{lemma}[theorem]{Lemma}
\newtheorem{main conjecture}[theorem]{Main Conjecture}
\newtheorem{main theorem}[theorem]{Main Theorem}
\newtheorem{modesty proposition}[theorem]{Modesty Proposition}
\newtheorem{open problem}[theorem]{Open Problem}
\newtheorem{propn}[theorem]{Proposition}
\newtheorem{remark}[theorem]{Remark}
\newtheorem{convergence lemma}[theorem]{Convergence Lemma}
\newtheorem{corrected lemma}[theorem]{Corrected Lemma}
\newtheorem{growth lemma}[theorem]{Growth Lemma}
\newtheorem{coefficient lemma}[theorem]{Integrality Lemma}
\newtheorem{interpolation lemma}[theorem]{Interpolation Lemma}
\newtheorem{kernel lemma}[theorem]{Kernel Lemma}
\newtheorem{limit lemma}[theorem]{Limit Lemma}
\newtheorem{tandem lemma}[theorem]{Modesty Lemma}
\newtheorem{zero-finding lemma}[theorem]{Zero-Finding Lemma}
\definecolor{Green}{rgb}{0.0, 0.5, 0.0}
\begin{document}

\title{Consequences of functional equations for pairs of $p$-adic $L$-functions}

\begin{abstract}
We prove consequences of functional equations of $p$-adic $L$-functions for elliptic curves at supersingular primes $p$. The results include a relationship between the leading and sub-leading terms (for which we use ideas of Wuthrich and Bianchi), a parity result of orders of vanishing, and invariance of Iwasaswa invariants under conjugate 
twists of the $p$-adic $L$-functions.
\end{abstract}

\author{C\'{e}dric Dion}
\address{C\'{e}dric Dion,
D\'epartement de math\'ematiques et de statistique\\
Universit\'e Laval, Pavillon Alexandre-Vachon\\
1045 Avenue de la M\'edecine\\
Qu\'ebec, QC\\
Canada G1V 0A6}
\email{cedric.dion.1@ulaval.ca}

\author{Florian Sprung}
\address{Florian Sprung, 
 School of Mathematical and Statistical Sciences\\
Arizona State University\\
Tempe, AZ 85287-1804\\ USA}
\email{florian.sprung@asu.edu}

\maketitle

\section{Introduction}

In a beautifully short paper \cite{wuthrich}, Wuthrich related the leading term of the $L$-function of an elliptic curve $E$ with the second non-zero term in the Taylor expansion about $1$, which he called the sub-leading term. Bianchi adapted Wuthrich's ideas to $p$-adic $L$-functions of $E$. In case $p$ is a prime of ordinary reduction, Bianchi was able to go further. As a consequence of the nice behavior (integrality) of the $p$-adic $L$-function, Bianchi found a fact endemic to the $p$-adic world: 
The twist of the $p$-adic $L$-function by a 
 character $\psi$ has the same $\mu$-invariant as the twist by the conjugate character $\bar{\psi}$. 

A natural question is what happens when $p$ is a prime of supersingular (i.e. non-ordinary) reduction. The $p$-adic $L$-function for the ordinary case has two direct supersingular analogues. They are not power series with integral coefficients, so we can't speak of $\mu$-invariants. However, one can construct a pair $(L^\sharp,L^\flat)$ of integral $p$-adic $L$-functions \cite{ant}. The point of this article is thus to develop the ideas of Wuthrich and Bianchi in the setting of these $\sharp/\flat$ $L$-functions $L^{\sharp/\flat}$.

We relate the leading and sub-leading coefficients of $L^{\sharp/\flat}$ in the spirit of Bianchi and Wuthrich. Analogously to Bianchi, we deduce from this that the $\mu$-invariants of the twists of $L^{\sharp/\flat}$ by $\psi$ and $\bar{\psi}$ are the same. Do the $\lambda$-invariants also stay the same? We prove that the answer is yes. Further, we show that this $\lambda$-invariance holds also for the ordinary $p$-adic $L$-function.  Another new consequence is that the two non-integral $p$-adic $L$-functions in the supersingular case vanish to the same order modulo $2$ at the critical point. We can say the same about the functions $L^{\sharp/\flat}$.

The most important tool in the proofs is the functional equation. In \cite{ant}, two slightly different versions of pairs $(L^\sharp,L^\flat)$ were defined. We work with the first version, which is amenable to functional equations. The pair $(L^\sharp,L^\flat)$ we work with in this paper was denoted differently in \cite{ant} as $(\widehat{L}^\sharp,\widehat{L}^\flat)$, to record a completion that makes the functional equation work. But we drop the hats for notational convenience. The second non-completed version reduces to Pollack's $p$-adic $L$-functions $L_p^{\pm}$ defined when $a_p:=p+1-\#E(\Fp)=0$. The vanishing of $a_p$ guarantees that $L_p^{\pm}$ also fits a functional equation, allowing us to work out the results for $L_p^{\pm}$ as well.

Bianchi's idea for proving the invariance of the $\mu$-invariant is to show that the responsible term is in the same position for both power series in question. An alternative idea due to Pollack -- to simply observe that the two $p$-adic $L$-functions are Galois conjugates -- may be employed in the ordinary case and the case $a_p=0$ assuming further that $p \equiv 3 \mod 4$; his methods show further that the $\lambda$-invariants are the same in these cases as well.

By contrast, our methods count zeroes in the unit disk and work in all cases, for both the $\mu$ and the $\lambda$-invariants. The arguments may be of separate interest, and the reader may find them in the appendix.

{\textbf{Acknowledgment.} We would like to thank Antonio Lei for putting us in touch.}

\section{Notation}
Let $E$ be an elliptic curve over the field of rational numbers $\Q$ and let $p$ be a prime of good supersingular reduction for $E$. We denote by $\alpha$ and $\beta$ the two roots of the Hecke polynomial $X^2 -a_pX + p$. 
Amice and V\'{e}lu, and Vi\v{s}ik constructed two $p$-adic $L$-functions $L_{\alpha}(E,T)$ and $L_{\beta}(E,T)$ that each interpolate the special values of (cyclotomic twists of) the $L$-function of $E$. We denote their twists by Dirichlet characters $\psi$ by $L_{\alpha}(E,\psi,T)$ and $L_{\beta}(E,\psi,T)$, and make the convention of simply dropping the symbol $\psi$ when $\psi$ is the trivial character. See \cite[I, paragraph 13]{MTT}. 
We let $\overline{\psi}$ be the complex-conjugate of $\psi$. When the character of interrest is either trivial or quadratic, we denote it by $\chi$ to avoid confusion. $U$ denotes the open unit disk in $\C_p$, and we let $\Lambda=\Z_p[[T]]$. In the following section, we recall the main properties of the functions $\widehat{L}_p^{\sharp}(E,\psi,T)$ and $\widehat{L}_p^{\flat}(E,\psi,T)$ from \cite{ant}, which we denote without the hats as $L_p^{\sharp}(E,\psi,T)$ and $L_p^{\flat}(E,\psi,T)$ for convenience.

\section{Background}

In this section, we briefly recall the $p$-adic $L$-functions of Amice--V\'{e}lu and Vi\v{s}ik. See \cite{MTT} or the original papers \cite{vishik}, \cite{amicevelu}. We then recall the $\sharp/\flat$ $p$-adic $L$-functions of \cite{ant}. Their two essential properties are their relation with the Amice--V\'{e}lu--Vi\v{s}ik $p$-adic $L$-functions and their functional equation. In fact, we go a tiny bit beyond simply recalling them. While \cite{ant} only worked with twists by powers of the Teichmuller character, we state the results for the twists by any Dirichlet character $\psi$.

Amice--V\'{e}lu and Vi\v{s}ik's constructed $p$-adic distributions $\mu_{E,*}$ on $\Z_p^{\times}$ attached to $E$ for $* \in \{\alpha,\beta\}$. For $x \in \Z_p^{\times}$, denote by $\langle x \rangle$ the projection of $x$ on $1+p\Z_p$  (or on $1+4\Z_2$ when $p=2$). If $\psi$ is a Dirichlet character of conductor $p^kM$ with $p \nmid M$, their $p$-adic $L$-function $L_{*}(E,\psi,s)$ is then given by
$$
L_{*}(E,\psi,s) = \int_{\Z_{p,M}^{\times}} \psi(x)\langle x \rangle^{s-1} \mathrm{d}\mu_{E,*}.
$$
where $\Z_{p,M}^{\times} = \Z_p^{\times}\times (\Z/M\Z)^{\times}$. It is a $p$-adic locally analytic function in the variable $s\in \Z_p$. Let $\Gamma$ be the Galois group $\text{Gal}(\Q_{\infty}/\Q) \cong \Z_p$ of the cyclotomic $\Z_p$-extension of $\Q$. Choose $\gamma$ a topological generator of $\Gamma$ and let $\kappa$ be the cyclotomic character. The change of variables $T = \kappa(\gamma)^{s-1} -1$ transforms $L_{*}(E,\psi,s)$ into the power series
$$
L_{*}(E,\psi,T) :=\int_{\Z_{p,M}^{\times}}\psi(x) (1+T)^{\frac{\log \langle x \rangle}{\log \kappa(\gamma)}} \mathrm{d}\mu_{E,*}.
$$ From now on, all functions are in the variable $T$.

Choose $c_Q\in\{\pm1\}$ so that $f|_{\smat{0 & -1 \\ Q & 0}} = c_Q f$. Here $f$ is the modular form attached to $E$, and $Q$ is the largest divisor of the level $N$ of $E$ that is coprime to both $p$ and the conductor of $\psi$. The function $L_{*}(E,\psi,T)$ fits the functional equation
$$
L_{*}(E,\psi,T) = -(1+T)^{-\log_{\gamma}(Q)}\overline{\psi}(-Q)c_Q L_{*}(E,\overline{\psi}, \frac{1}{1+T}-1),
$$ \cite[Sections 5 and 17]{MTT}. We put $\log_{\gamma}(x) = \frac{\log \langle x \rangle}{\log \kappa(\gamma)}$ to ease notation.

Once $p$-adic numbers are identified with complex numbers, the special values of $L_{*}(E,T)$ at $T=\zeta_{p^n}-1$ can be related to special values of complex $L$-series. (More precisely, they are multiples by algebraic numbers normalized by a transcendental period $\Omega$ of the special values of twists of the Hasse-Weil $L$-functions at the complex value $1$ -- see \cite[Section 14]{MTT}.) We record the property for $T=0$:
$$
L_{*}(E,0) = \frac{1}{\Omega}\left( 1 - \frac{1}{*} \right)^2 L(E,1).
$$
Unlike in the ordinary case, the power series coefficients of $L_*(E,\psi,T)$ are unbounded so that $L_*(E,\psi,T) \notin \Lambda$ (and even $\notin\Lambda\tensor \C_p$). The main theorem of \cite{ant} remedies this:
\begin{theorem}
We have the factorization
$$
\left(L_{\alpha}(E,\psi,T),L_{\beta}(E,\psi,T)\right) = \left( L_p^{\sharp}(E,\psi,T),L_p^{\flat}(E,\psi,T) \right) \Log_{\alpha,\beta}(1+T)
$$
for two power series $L_p^{\sharp}(E,\psi,T),L_p^{\flat}(E,\psi,T)\in \Lambda$, where $\Log_{\alpha,\beta}(1+T)$ is an explicit $2 \times 2$ matrix of functions converging on $U$. 
\end{theorem}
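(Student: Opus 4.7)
The plan is to reduce the statement to the main theorem of \cite{ant}, which proves the factorization when $\psi$ is a power of the Teichmüller character, by absorbing the prime-to-$p$ part of $\psi$ into the distribution. Let $\psi$ have conductor $p^kM$ with $p \nmid M$, and factor $\psi = \psi_p\psi_M$ accordingly, where $\psi_p$ has $p$-power conductor and $\psi_M$ has conductor $M$.

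First, I would package the twist by $\psi_M$ into the distribution itself. Starting from the Amice--Vélu--Višik distribution $\mu_{E,*}$ on $\Z_{p,M}^{\times} = \Z_p^{\times} \times (\Z/M\Z)^{\times}$, integration of $\psi_M$ along the prime-to-$p$ factor produces a distribution $\mu_{E,*}^{\psi_M}$ on $\Z_p^{\times}$ satisfying
$$
\int_{\Z_p^{\times}} \eta(x)(1+T)^{\log_\gamma(x)} \, d\mu_{E,*}^{\psi_M}(x) = L_*(E, \eta\psi_M, T)
$$
for every character $\eta$ of $p$-power conductor, since the integrand factors across the product decomposition. Taking $\eta = \psi_p$ recovers $L_*(E,\psi,T)$.

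Next, I would apply the factorization from \cite{ant} to $\mu_{E,*}^{\psi_M}$. The construction in \cite{ant} begins from a distribution on $\Z_p^{\times}$ of the appropriate growth order and inverts a Perrin-Riou-type logarithm matrix to produce integral $\sharp/\flat$ components; nothing in that argument relies on the specific form of the original modular-symbol distribution beyond its support and its $p$-adic growth. Since the matrix $\Log_{\alpha,\beta}(1+T)$ depends only on the $p$-adic Hodge datum encoded by $\alpha$ and $\beta$ (not on $\psi_M$), the very same matrix mediates the factorization after twisting. This yields power series $L_p^{\sharp}(E,\psi,T), L_p^{\flat}(E,\psi,T)$ lying in $\Lambda$ (extended by the values of $\psi_M$), and the matrix identity follows by comparing the two sides on a dense set of finite-order characters.

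The principal point to verify -- and the only place where the $\psi_M$-twist substantively enters -- is that $\mu_{E,*}^{\psi_M}$ has the same $p$-adic growth as $\mu_{E,*}$, so that the arguments of \cite{ant} guarantee integrality of the $\sharp/\flat$ components. Since $\psi_M$ takes values in roots of unity of order prime to $p$ (hence $p$-adic units), it does not affect the $p$-adic size of moments, and the growth estimates carry over verbatim. The convergence of $\Log_{\alpha,\beta}(1+T)$ on the open unit disk $U$ is unchanged from \cite{ant}, since this matrix is defined independently of $\psi$.
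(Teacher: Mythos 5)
Your proposal is correct and follows essentially the same route as the paper, which simply cites \cite[Theorem 2.14]{ant} and asserts that the proof for Teichm\"uller-power twists carries over verbatim to arbitrary $\psi$. Your write-up usefully supplies the justification the paper leaves implicit (factoring $\psi=\psi_p\psi_M$, absorbing $\psi_M$ into the distribution, and checking that the growth estimates and the $\psi$-independence of $\Log_{\alpha,\beta}$ are unaffected), including the correct observation that for nontrivial $\psi$ the coefficients land in $\Lambda$ extended by the values of $\psi$.
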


\begin{proof}
This is \cite[Theorem 2.14]{ant}, where the statement is proved when $\psi$ is a power for the Teichmuller character. The same proof applies to arbitrary $\psi$. Note that $\Log_{\alpha,\beta}(1+T)$ was denoted $\widehat{\Log}_{\alpha,\beta}(1+T)$ in \cite[Section 4]{ant}. We don't recall its definition in this paper since it is not needed.
\end{proof}

 The $\sharp / \flat$ $L$-functions satisfy a functional equation similar to $L_{\alpha/\beta}(f,\psi,T)$. 

\begin{theorem}\label{functional}
Let $E$ be an elliptic curve over $\Q$ of level $N$ and $p$ a good supersingular prime. Let $\psi$ be a Dirichlet character, and let $Q$ be the largest divisor of $N$ that is coprime to both $p$ and the conductor of $\psi$. Then
\begin{align*}
L_p^{\sharp}(E,\psi,T) &= -(1+T)^{-\log_{\gamma}(Q)}\overline{\psi}(-Q)c_Q L_p^{\sharp}(E,\overline{\psi},\frac{1}{1+T}-1), \\
L_p^{\flat}(E,\psi,T) &= -(1+T)^{-\log_{\gamma}(Q)}\overline{\psi}(-Q)c_Q L_p^{\flat}(E,\overline{\psi},\frac{1}{1+T}-1).
\end{align*}
\end{theorem}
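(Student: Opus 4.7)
The plan is to deduce the functional equation for $L_p^{\sharp}$ and $L_p^{\flat}$ from the corresponding functional equation for the Amice--V\'elu--Vi\v{s}ik $L$-functions $L_\alpha(E,\psi,T)$ and $L_\beta(E,\psi,T)$, using the factorization theorem as the bridge. Write $\iota(T) := \frac{1}{1+T}-1$ for the involution that appears in both sides of the functional equation.

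First I would apply the factorization theorem in two incarnations: once to $(E,\psi,T)$, and once to $(E,\overline{\psi},\iota(T))$. This gives
\begin{align*}
(L_\alpha,L_\beta)(E,\psi,T) &= (L_p^\sharp,L_p^\flat)(E,\psi,T)\cdot \Log_{\alpha,\beta}(1+T),\\
(L_\alpha,L_\beta)(E,\overline{\psi},\iota(T)) &= (L_p^\sharp,L_p^\flat)(E,\overline{\psi},\iota(T))\cdot \Log_{\alpha,\beta}\bigl(1+\iota(T)\bigr).
\end{align*}
Combining these with the entrywise Amice--V\'elu--Vi\v{s}ik functional equation recalled in the Background section yields the matrix identity
\[
(L_p^\sharp,L_p^\flat)(E,\psi,T)\cdot \Log_{\alpha,\beta}(1+T) = -(1+T)^{-\log_\gamma Q}\overline{\psi}(-Q)c_Q\cdot (L_p^\sharp,L_p^\flat)(E,\overline{\psi},\iota(T))\cdot \Log_{\alpha,\beta}\bigl(1+\iota(T)\bigr).
\]

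The crux of the argument is then the invariance
\[
\Log_{\alpha,\beta}(1+T)=\Log_{\alpha,\beta}\bigl(1+\iota(T)\bigr),
\]
which is exactly the property that motivated the ``completed'' logarithm matrix $\widehat{\Log}_{\alpha,\beta}$ of \cite{ant}. I would verify it by a direct computation from the explicit description of the four entries of $\Log_{\alpha,\beta}(1+T)$, tracking how the ingredients (the $\alpha,\beta$-indexed logarithmic factors and the plus/minus-type products) transform under $T\mapsto \iota(T)$. This is the step I expect to be the main obstacle, both because it is the only place where one actually has to open up the black box of the definition, and because sign conventions and the factor $(1+T)^{-\log_\gamma Q}$ must be tracked consistently to see that they are already absorbed on the scalar side and do not spill into the matrix.

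Once the invariance is in place, $\Log_{\alpha,\beta}(1+T)$ can be cancelled from both sides---its nonsingularity over the field of meromorphic functions on $U$ is built into the factorization theorem---and comparing the resulting row vectors coordinate by coordinate gives the two asserted functional equations for $L_p^\sharp(E,\psi,T)$ and $L_p^\flat(E,\psi,T)$ simultaneously. The remainder of the argument is purely formal.
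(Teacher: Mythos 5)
Your proposal is correct and takes essentially the same route as the paper, whose proof simply defers to \cite[Theorem 5.19]{ant}: combine the Amice--V\'elu--Vi\v{s}ik functional equation with the factorization theorem, the crux being the invariance of the completed matrix $\Log_{\alpha,\beta}$ under $T\mapsto \frac{1}{1+T}-1$. You correctly identify that invariance as the very purpose of the completion---each cyclotomic factor $\Phi_{p^n}(1+T)$ is normalized by $(1+T)^{p^{n-1}(p-1)/2}$, and since $\Phi_{p^n}\bigl((1+T)^{-1}\bigr)=(1+T)^{-p^{n-1}(p-1)}\Phi_{p^n}(1+T)$ the normalized factor is fixed by $(1+T)\mapsto(1+T)^{-1}$---but note that this computation, the only non-formal step, is the one you leave unexecuted.
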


\begin{proof}
This follows from the same methods as in \cite[Theorem 5.19]{ant}. 
\end{proof}

\begin{remark}\label{completion} The original uncompleted $\sharp/\flat$ $p$-adic $L$-functions constructed in \cite{shuron} do not satisfy a nice functional equation, but there is an explicit relationship between these uncompleted and our completed $p$-adic $L$-functions, cf. \cite[Corollary 5.11]{ant}. \end{remark}

\section{The main results}
In this section, we state the main results of the paper. The first is the parity of the orders of vanishing at the critical point of the $\sharp/\flat$ $p$-adic $L$-functions, the second is the relation between the leading and sub-leading terms in each of them, and the third is the invariance of the Iwasawa invariants under the substitution $\psi \mapsto \overline{\psi}$, where $\psi$ is any Dirichlet character 

 Let $\chi$ be either the trivial character or a quadratic character. We let $Q$ and $N$ be as in the previous section: $Q$ denotes the largest divisor of the conductor $N$ of $E$ that is coprime to both $p$ and the conductor of $\chi$. Consider the Taylor expansions $$L_p^\sharp(E,\chi,T)=a_\sharp T^{m_\sharp}+b_\sharp T^{{m_\sharp}+1}+\cdots$$ and $$L_p^\flat(E,\chi,T)=a_\flat T^{m_\flat}+b_\flat T^{{m_\flat}+1}+\cdots,$$  where $m_{\sharp/\flat}$ denote the orders of vanishing at $T=0$ of $L^{\sharp/\flat}(E,\chi,T)$.

\begin{theorem}\label{vanishing}
	The order of vanishing of $L_p^{\sharp}(E,\chi,T)$ at $T=0$ has the same parity as the order of vanishing of $L_p^{\flat}(E,\chi,T)$ at $T=0$.
\end{theorem}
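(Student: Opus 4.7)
The plan is to read off the parity of the order of vanishing directly from the functional equation in Theorem \ref{functional}. The crucial observation is that when $\chi$ is either trivial or quadratic we have $\overline{\chi}=\chi$, so the functional equation specializes to a self-symmetry:
$$
L_p^{\sharp/\flat}(E,\chi,T) \;=\; \epsilon_\chi \cdot (1+T)^{-\log_\gamma(Q)}\, L_p^{\sharp/\flat}\!\left(E,\chi,\tfrac{1}{1+T}-1\right),
$$
where $\epsilon_\chi := -\chi(-Q)\,c_Q \in \{\pm 1\}$ depends only on $\chi$ and $Q$, \emph{not} on the choice of $\sharp$ or $\flat$.

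Next, I would expand both sides around $T=0$. The involution $T\mapsto u:=\tfrac{1}{1+T}-1=-T+T^2-T^3+\cdots$ fixes $T=0$ with linearization $u=-T+O(T^2)$, and $(1+T)^{-\log_\gamma(Q)} = 1 + O(T)$. For any power series $f(T) = aT^m+O(T^{m+1})$ with $a\ne 0$, this gives
$$
f(u) = a(-T+O(T^2))^m + O(T^{m+1}) = a(-1)^m T^m + O(T^{m+1}).
$$
Applying this with $f=L_p^\sharp(E,\chi,T)$, matching leading coefficients in the functional equation yields $a_\sharp = \epsilon_\chi(-1)^{m_\sharp}a_\sharp$, and since $a_\sharp\ne 0$ by definition of $m_\sharp$, we deduce $(-1)^{m_\sharp} = \epsilon_\chi$. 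Repeating the same argument with $f=L_p^\flat(E,\chi,T)$ gives $(-1)^{m_\flat} = \epsilon_\chi$. Comparing, $(-1)^{m_\sharp} = (-1)^{m_\flat}$, which is exactly the claimed parity equality.

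I do not anticipate a serious obstacle: the entire argument is a one-line matching of leading Taylor coefficients, and the single non-trivial ingredient is the observation that $\epsilon_\chi$ is identical for the $\sharp$ and $\flat$ components (and is well-defined because $\chi=\bar\chi$). The only minor care needed is to note that $L_p^{\sharp/\flat}(E,\chi,T)$ is assumed nonzero so that a finite order of vanishing exists, and that $(1+T)^{-\log_\gamma(Q)}$, defined via the binomial series, has constant term $1$, which is all that enters the leading-order computation.
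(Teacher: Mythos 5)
Your proposal is correct and takes essentially the same route as the paper: the paper differentiates the functional equation $m_{\sharp}$ (resp.\ $m_{\flat}$) times and evaluates at $T=0$, which is exactly your matching of leading Taylor coefficients, yielding $(-1)^{m_{\sharp/\flat}}=\epsilon_\chi$ with the same sign $\epsilon_\chi$ for both. Your version is in fact slightly more explicit in keeping track of the factor $\chi(-Q)$ and in noting that $\overline{\chi}=\chi$ is what makes the equation a self-symmetry.
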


\begin{proof}
	Differentiating the functional equations in Theorem~\ref{functional} $m_{\sharp}$ resp. $m_{\flat}$ times and evaluating at $T=0$, we obtain $(-1)^{m_{\sharp}}=-c_Q$ and $(-1)^{m_{\flat}}=-c_Q$, cf. \cite[Proof of Theorem 4.1]{bianchi}. Thus, $m_{\sharp} \equiv m_{\flat} \mod 2$.
\end{proof}

\begin{corollary} The parities of the orders of vanishing of $L_\alpha(E,\chi,T)$ and $L_\beta(E,\chi,T)$ are the same.
\end{corollary}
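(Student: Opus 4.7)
The plan is to mimic the proof of Theorem~\ref{vanishing} by applying the same differentiation argument directly to the functional equations of $L_\alpha(E,\chi,T)$ and $L_\beta(E,\chi,T)$ recalled in Section~3. The first observation I would make is that since $\chi$ is either trivial or quadratic, it is real-valued, so $\overline{\chi}=\chi$; consequently the functional equation for $L_*(E,\chi,T)$ takes the self-referential form
$$L_*(E,\chi,T) \;=\; -(1+T)^{-\log_{\gamma}(Q)}\chi(-Q)\,c_Q\, L_*\!\left(E,\chi,\tfrac{1}{1+T}-1\right), \quad *\in\{\alpha,\beta\},$$
in complete analogy with the $\sharp/\flat$ version used in Theorem~\ref{vanishing}.

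Next I would denote by $m_\alpha$ and $m_\beta$ the orders of vanishing of $L_\alpha(E,\chi,T)$ and $L_\beta(E,\chi,T)$ at $T=0$; these make sense even though $L_*(E,\chi,T)\notin\Lambda$, since $L_*(E,\chi,T)$ is a power series in $T$ (albeit with unbounded coefficients). Using the expansion $\tfrac{1}{1+T}-1=-T+O(T^2)$ and comparing the lowest-order Taylor coefficient at $T=0$ on each side of the functional equation yields $(-1)^{m_*}=-\chi(-Q)\,c_Q$. Since the right-hand side is independent of $*$, I conclude $m_\alpha\equiv m_\beta\pmod{2}$.

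An alternative route would be to deduce the corollary from Theorem~\ref{vanishing} via the factorization $(L_\alpha,L_\beta)=(L_p^\sharp,L_p^\flat)\,\Log_{\alpha,\beta}(1+T)$, but this would require controlling the orders of vanishing of the entries of $\Log_{\alpha,\beta}(1+T)$ at $T=0$, which I expect to be strictly more work than the direct argument above. I do not foresee any real obstacle: the only thing to check is that the functional equations of $L_\alpha$ and $L_\beta$ under a real-valued character $\chi$ have the same shape as those of $L_p^\sharp$ and $L_p^\flat$, and this is immediate from $\overline{\chi}=\chi$.
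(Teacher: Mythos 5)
Your proposal is correct and matches the paper's proof, which likewise deduces the corollary by running the differentiation argument of Theorem~\ref{vanishing} directly on the functional equations of $L_\alpha(E,\chi,T)$ and $L_\beta(E,\chi,T)$ from \cite[Section 17]{MTT}. Your extra observations (that $\overline{\chi}=\chi$ makes the equation self-referential, and that the order of vanishing makes sense despite $L_*\notin\Lambda$) are just the details the paper leaves implicit.
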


\begin{proof}We obtain this by using the same arguments and the functional equations for $L_\alpha(E,\chi,T)$ and $L_\beta(E,\chi,T)$, cf. \cite[Section 17]{MTT}.\end{proof}

\begin{corollary} When $a_p=0$, the functions of Pollack $L_p^\pm(E,T)$ have the same parities of orders of vanishing.
\end{corollary}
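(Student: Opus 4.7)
The plan is to imitate the proof of Theorem~\ref{vanishing} verbatim, once we know that Pollack's $L_p^{\pm}(E,\chi,T)$ satisfy a functional equation of the same shape as Theorem~\ref{functional}. As the introduction already flags, the vanishing of $a_p$ is precisely what forces such a functional equation. Concretely, I would first establish
\[
L_p^{\pm}(E,\chi,T) \;=\; -(1+T)^{-\log_{\gamma}(Q)}\,\chi(-Q)\,c_Q\,L_p^{\pm}\!\left(E,\chi,\tfrac{1}{1+T}-1\right),
\]
using that $\chi=\overline{\chi}$ in our setting. Then differentiating $m_+$ resp.\ $m_-$ times and evaluating at $T=0$ gives $(-1)^{m_{\pm}}=-c_Q$, exactly as in the proof of Theorem~\ref{vanishing}, so $m_+\equiv m_-\pmod 2$.

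To get the functional equation itself, I would use Pollack's factorisation (valid when $a_p=0$, so that $\alpha=-\beta=\sqrt{-p}$)
\[
L_{\alpha}(E,\chi,T)=\log_p^{+}(1+T)\,L_p^{+}(E,\chi,T)+\alpha\,\log_p^{-}(1+T)\,L_p^{-}(E,\chi,T),
\]
together with the analogous identity for $\beta$. One checks directly from the product definition of $\log_p^{\pm}$ that under the substitution $T\mapsto \frac{1}{1+T}-1$, each half-logarithm is multiplied by $(1+T)^{-\log_{\gamma}(Q')}$ times a fixed sign (independent of the variable), with the signs for $\log_p^+$ and $\log_p^-$ being equal. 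Combining this behaviour with the known functional equation of $L_\alpha$ (and of $L_\beta$), and using linear independence of $\log_p^+$ and $\log_p^-$ over the relevant coefficient ring, the plus- and minus-components of the functional equation decouple and yield the displayed formulas above. Alternatively, one may invoke the explicit relationship between the uncompleted $\sharp/\flat$ $L$-functions (which, when $a_p=0$, reduce to $L_p^{\pm}$, up to twisting by a unit depending on $Q$) and the completed $L_p^{\sharp/\flat}$ recorded in Remark~\ref{completion}, and transport the functional equation of Theorem~\ref{functional} through this relationship.

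The main obstacle is the separation of Pollack's factorisation into a clean $\pm$-by-$\pm$ functional equation. Once that is in hand, the corollary follows by a one-line differentiation argument identical to the proof of Theorem~\ref{vanishing}.
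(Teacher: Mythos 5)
Your argument is correct in substance, but it takes a genuinely different and considerably more laborious route than the paper. The paper's proof is a one-liner: by \cite[Corollary 5.1]{ant}, when $a_p=0$ the functions $L_p^{\pm}(E,T)$ equal $L_p^{\sharp/\flat}(E,T)$ up to units of $\Lambda$; since a unit of $\Lambda$ does not vanish at $T=0$, the orders of vanishing of $L_p^{\pm}$ coincide with those of $L_p^{\sharp/\flat}$, and the parity statement is then immediate from Theorem~\ref{vanishing}. You instead re-derive a functional equation for $L_p^{\pm}$ from Pollack's factorisation and rerun the differentiation argument. That works, and in fact the paper itself carries out exactly this programme later (Section 5), but be aware that your displayed functional equation is not quite right as written: the behaviour of $\log_p^{\pm}$ under $T\mapsto\frac{1}{1+T}-1$ is not just a power of $(1+T)$ times a fixed sign --- it produces an extra unit power series $W^{\pm}(1+T)$ (an infinite product of powers of $(1+T)$), which appears explicitly in the functional equations recorded in Section 5. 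This imprecision is harmless for the parity conclusion, since $W^{\pm}(1)=1$ and the $m_{\pm}$-fold differentiation at $T=0$ still yields $(-1)^{m_{\pm}}=-c_N$, but it would matter for the finer leading/sub-leading term relation. The ``decoupling'' via linear independence of $\log_p^{+}$ and $\log_p^{-}$ that you flag as the main obstacle is genuine and is precisely what \cite[Section 5.1]{ant} establishes; the paper's route through \cite[Corollary 5.1]{ant} buys you the conclusion without having to redo any of that work.
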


\begin{proof} By \cite[Corollary 5.1]{ant}, $L_p^\pm(E,T)$ equal $L_p^{\sharp/\flat}(E,T)$ up to units.
\end{proof}

\begin{theorem}\label{main} We have $$b_\sharp=\frac{-a_\sharp}{2}\left(\log_{\gamma}(Q)+m_{\sharp}\right) \text{ and } b_\flat=\frac{-a_\flat}{2}\left(\log_{\gamma}(Q)+m_{\flat}\right)  .$$
\end{theorem}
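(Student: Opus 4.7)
The plan is to extract both identities from the functional equation of Theorem~\ref{functional} by Taylor-expanding to order $T^{m_\sharp+1}$ (resp.~$T^{m_\flat+1}$) and comparing coefficients. Since $\chi$ is trivial or quadratic, $\overline{\chi}=\chi$, so the functional equation reads
\begin{equation*}
L_p^\sharp(E,\chi,T) = -(1+T)^{-\log_\gamma(Q)}\chi(-Q)c_Q \, L_p^\sharp\!\left(E,\chi,\tfrac{1}{1+T}-1\right),
\end{equation*}
and similarly for $\flat$. The proof will be identical for $\sharp$ and $\flat$, so I will write out $\sharp$ only.

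First I would expand $s := \tfrac{1}{1+T}-1 = -T + T^2 - T^3 + \cdots$, which gives
\begin{equation*}
s^{m_\sharp} = (-1)^{m_\sharp}T^{m_\sharp}\bigl(1 - m_\sharp T + O(T^2)\bigr), \qquad s^{m_\sharp+1} = (-1)^{m_\sharp+1}T^{m_\sharp+1} + O(T^{m_\sharp+2}).
\end{equation*}
Plugging into $L_p^\sharp(E,\chi,s) = a_\sharp s^{m_\sharp} + b_\sharp s^{m_\sharp+1} + \cdots$, I obtain
\begin{equation*}
L_p^\sharp(E,\chi,s) = (-1)^{m_\sharp}\Bigl[a_\sharp T^{m_\sharp} - (a_\sharp m_\sharp + b_\sharp)T^{m_\sharp+1}\Bigr] + O(T^{m_\sharp+2}).
\end{equation*}
Next, using $(1+T)^{-\log_\gamma(Q)} = 1 - \log_\gamma(Q)T + O(T^2)$ and multiplying through by $-\chi(-Q)c_Q$, the right-hand side of the functional equation becomes
\begin{equation*}
-\chi(-Q)c_Q(-1)^{m_\sharp}\Bigl[a_\sharp T^{m_\sharp} - \bigl(a_\sharp\log_\gamma(Q) + a_\sharp m_\sharp + b_\sharp\bigr)T^{m_\sharp+1}\Bigr] + O(T^{m_\sharp+2}).
\end{equation*}

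Comparing the $T^{m_\sharp}$ coefficient with the left-hand side $a_\sharp T^{m_\sharp} + b_\sharp T^{m_\sharp+1}+\cdots$ forces $-\chi(-Q)c_Q(-1)^{m_\sharp} = 1$ (this is the refined version of the parity relation from Theorem~\ref{vanishing}). Substituting this identity back into the $T^{m_\sharp+1}$ comparison collapses the prefactor and yields
\begin{equation*}
b_\sharp = -a_\sharp\log_\gamma(Q) - a_\sharp m_\sharp - b_\sharp,
\end{equation*}
and solving for $b_\sharp$ gives exactly the claimed formula. The identical argument with $m_\sharp$ replaced by $m_\flat$ and $a_\sharp,b_\sharp$ replaced by $a_\flat,b_\flat$ handles the $\flat$ case.

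There is no real obstacle here: the proof is a direct expansion, and the only subtle point is the bookkeeping needed to verify that the prefactor $-\chi(-Q)c_Q(-1)^{m_\sharp}$ equals $+1$, so that the sub-leading relation becomes the clean linear equation in $b_\sharp$ rather than picking up an unwanted sign. This is precisely the same idea Bianchi uses in \cite{bianchi} and Wuthrich uses in \cite{wuthrich} in their respective settings, so the argument is a direct adaptation of theirs to the pair $(L_p^\sharp, L_p^\flat)$.
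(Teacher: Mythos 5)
Your proposal is correct and is essentially the paper's own argument: the paper differentiates the functional equation $m_\sharp+1$ (resp.\ $m_\flat+1$) times and evaluates at $T=0$, following Bianchi, which is exactly your Taylor-coefficient comparison in derivative form, including the preliminary step that the prefactor $-\chi(-Q)c_Q(-1)^{m_\sharp}$ equals $1$ (the content of Theorem~\ref{vanishing}). The computation checks out and yields $2b_\sharp=-a_\sharp(\log_\gamma(Q)+m_\sharp)$ as claimed.
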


\begin{proof} We differentiate the functional equations in Theorem~\ref{functional} $m_{\sharp}+1$ resp. $m_{\flat}+1$ times and then evaluate at $T=0$. This is analogous to \cite[Proof of Theorem 4.1]{bianchi}.
\end{proof}

We now let $\psi$ denote any Dirichlet character. 
\begin{definition} For the following theorem, we make the following conventions: We denote the $\lambda$-invariants of $L_p^{\sharp}(E,\psi,T)$ and $L_p^{\flat}(E,\psi,T)$ by $\lambda_\sharp$ and $\lambda_\flat$, and the $\lambda$-invariants of $L_p^{\sharp}(E,\overline{\psi},T)$ and $L_p^{\flat}(E,\overline{\psi},T)$ by $\overline{\lambda}_\sharp$ and $\overline{\lambda}_\flat$. 
We define the integers $\mu_\sharp,\mu_\flat,\overline{\mu}_\sharp,$ and $\overline{\mu}_\flat$ similarly.
\end{definition}
\begin{theorem}\label{invariance} The Iwasawa invariants of $L_p^{\sharp}(E,\psi,T)$ and $L_p^{\flat}(E,\psi,T)$ stay invariant under the substitution $\psi\mapsto \overline{\psi}$. More precisely, 
 \begin{enumerate}
 	\item We have $\lambda_\sharp=\overline{\lambda}_\sharp$ and $\lambda_\flat=\overline{\lambda}_\flat$.
 	\item We have $\mu_\sharp=\overline{\mu}_\sharp$ and $\mu_\flat=\overline{\mu}_\flat$.
 \end{enumerate}
\end{theorem}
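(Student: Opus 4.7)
The plan is to turn the functional equation from Theorem~\ref{functional} into an assertion that $L_p^{\sharp/\flat}(E,\psi,T)$ and $L_p^{\sharp/\flat}(E,\overline{\psi},T)$ differ by a transformation that manifestly preserves both Iwasawa invariants.

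First, I would rewrite Theorem~\ref{functional} in the compact form
$$L_p^{\sharp/\flat}(E,\psi,T) \;=\; U(T)\cdot L_p^{\sharp/\flat}\!\left(E,\overline{\psi},\sigma(T)\right),$$
where $\sigma(T):=\tfrac{-T}{1+T}$ and $U(T):=-(1+T)^{-\log_\gamma(Q)}\overline{\psi}(-Q)\,c_Q$. The preparatory facts to verify are that $\sigma$ is an involution on the open unit disk $U$ with $\sigma(0)=0$ and $\sigma'(0)=-1\in\Zp^\times$, and that $U(T)$ is a unit in $\O\lb T\rb$, where $\O$ is the ring of integers of a finite extension of $\Qp$ containing the values of $\psi$. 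The former is immediate; the latter uses (i) the binomial expansion $(1+T)^{-\log_\gamma(Q)}=\sum_n\binom{-\log_\gamma(Q)}{n}T^n\in\Zp\lb T\rb$ with constant term $1$ (noting $-\log_\gamma(Q)\in\Zp$ because $Q$ is coprime to $p$), and (ii) that $-\overline{\psi}(-Q)c_Q$ is a root of unity, hence a unit in $\O$.

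Next, I would introduce the operator $\tau\colon f(T)\mapsto U(T)\cdot f(\sigma(T))$ on $\O\lb T\rb$ and show that it preserves each Iwasawa invariant. For $\mu$: substitution by $\sigma$ is a continuous $\O$-algebra automorphism of $\O\lb T\rb$ (its inverse is substitution by $\sigma$ itself, since $\sigma\circ\sigma=\mathrm{id}$), so it sends each ideal $(p^n)$ to itself, and multiplication by a unit does the same; hence $\mu(\tau(f))=\mu(f)$. For $\lambda$: $\sigma$ restricts to a bijection $U\to U$ preserving zero multiplicities, and $U(T)$ is nonvanishing on $U$ (since $|1+T|_p=1$ there), so the zero-counting definitions of $\lambda$ for $\tau(f)$ and for $f$ coincide. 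Applying this with $f=L_p^{\sharp/\flat}(E,\overline{\psi},T)$ yields both parts of the theorem simultaneously.

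The main obstacle I anticipate is administrative rather than conceptual: confirming that extending coefficients from $\Zp$ to $\O$ to accommodate the values of $\psi$ is compatible with the definition of the Iwasawa invariants (it is, for any complete DVR), and handling the normalization of $\langle\cdot\rangle$ when $p=2$. The zero-counting step for $\lambda$ is where one must be most careful, and since the introduction advertises a dedicated appendix for the zero-counting arguments, I would defer the fully rigorous write-up of that step to there.
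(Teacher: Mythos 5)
Your proposal is correct and follows the same overall strategy as the paper: reduce, via the functional equation of Theorem~\ref{functional}, to the general statement that $f(T)$ and $u\cdot f\left(\frac{1}{1+T}-1\right)$ have the same Iwasawa invariants for $u\in\Lambda^\times$, and prove the $\lambda$-part by matching zeros in the open unit disk under the involution $\sigma(T)=\frac{-T}{1+T}$ with their multiplicities --- exactly the content of Proposition~\ref{lambda-invariance} in the appendix (your deferral of the careful multiplicity bookkeeping to an appendix mirrors what the paper actually does, e.g.\ the computation that the correction factor $V_\zeta(T)$ is a unit). The one genuine divergence is the $\mu$-part: the paper's Proposition~\ref{mu-invariance} runs the Weierstrass factorization $f=p^\mu D(T)U(T)$ through the zero analysis of the $\lambda$-proof, whereas you observe directly that $T\mapsto\sigma(T)$ is a continuous $\Zp$-algebra automorphism of $\Lambda$ (its own inverse), hence preserves each ideal $p^n\Lambda$, as does multiplication by a unit. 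Your argument is cleaner and logically independent of the $\lambda$-analysis; the paper's buys a little extra explicitness (an actual factorization of $g$) at the cost of reusing the zero-counting machinery. Both are valid, and your preliminary checks (that $-\log_\gamma(Q)\in\Zp$ so that $(1+T)^{-\log_\gamma(Q)}$ is a unit power series, that $\overline{\psi}(-Q)c_Q$ is a unit, and that one must enlarge coefficients to $\mathcal{O}$ for nontrivial $\psi$) match or slightly sharpen what the paper states.
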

\begin{proof}
	We carry out the proof for the $\sharp$ $p$-adic $L$-functions. By Theorem \ref{functional}, $L_p^{\sharp}(E,\psi,T)$ and $L_p^{\sharp}(E,\overline{\psi},\frac{1}{1+T}-1)$ differ by multiplication by an element of the form $-(1+T)^{-\log_{\gamma}(Q)}\overline{\psi}(-Q)c_Q$. By definition, $\overline{\psi}(-Q)c_Q$ is a unit. The term $-(1+T)^{-\log_{\gamma}(Q)}$ is a unit power series.
	
	 The $\lambda$-invariance and
		 the $\mu$-invariance are treated as general propositions in the appendix, see Proposition~\ref{lambda-invariance} and Proposition~\ref{mu-invariance}.
\end{proof}

\begin{corollary}The above theorem holds if we replace the $\sharp/\flat$ $p$-adic $L$-functions by their non-completed counterparts constructed in \cite{shuron}.
\end{corollary}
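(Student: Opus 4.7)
The plan is to bootstrap from Theorem~\ref{invariance} for the completed $\sharp/\flat$ $p$-adic $L$-functions via the explicit comparison between the completed and non-completed versions referenced in Remark~\ref{completion}, namely \cite[Corollary 5.11]{ant}. I would first record precisely the statement of that comparison: the non-completed $\sharp/\flat$ $p$-adic $L$-functions of \cite{shuron} are obtained from their completed counterparts $L_p^{\sharp}(E,\psi,T)$ and $L_p^{\flat}(E,\psi,T)$ by multiplication by an explicit correction factor depending only on $T$ and on the conductor of $\psi$. Since $\bar{\psi}$ has the same conductor as $\psi$, the correction factor attached to $\bar{\psi}$ is either identical to the one attached to $\psi$ or its image under the involution $T \mapsto \frac{1}{1+T}-1$ appearing in the functional equation.

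Next, I would verify that this correction factor is a unit in $\Lambda$ (equivalently, has vanishing $\mu$- and $\lambda$-invariants). Multiplication by such a unit preserves both the $\mu$-invariant and the $\lambda$-invariant, so the non-completed $\sharp/\flat$ $p$-adic $L$-functions of \cite{shuron} for $\psi$ have the same Iwasawa invariants as the completed ones $L_p^{\sharp/\flat}(E,\psi,T)$, and likewise for $\bar{\psi}$. Combining this with Theorem~\ref{invariance}, which asserts $\mu_\sharp=\overline{\mu}_\sharp$, $\lambda_\sharp=\overline{\lambda}_\sharp$, and the analogous equalities for $\flat$ at the level of the completed $p$-adic $L$-functions, the corollary follows.

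The main bookkeeping task, and likely the only non-trivial check, is to confirm the unit property of the correction factor in \cite[Corollary 5.11]{ant}; in the event the factor is not literally a unit, one instead verifies that the factors attached to $\psi$ and $\bar{\psi}$ contribute equally to the $\mu$- and $\lambda$-invariants, which is still enough to preserve the invariance under $\psi \mapsto \bar{\psi}$. As an alternative route, one could redo the proof of Theorem~\ref{invariance} directly for the non-completed $L$-functions by first establishing their own (less clean) transformation law under $T \mapsto \tfrac{1}{1+T}-1$ and then invoking Propositions~\ref{lambda-invariance} and~\ref{mu-invariance} from the appendix, but the comparison approach is strictly shorter.
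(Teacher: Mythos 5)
Your proposal is correct and follows essentially the same route as the paper: reduce to the completed case by showing that the completed and non-completed $\sharp/\flat$ $p$-adic $L$-functions have identical $\mu$- and $\lambda$-invariants, then invoke Theorem~\ref{invariance}. The paper simply cites this equality of invariants directly (from \cite[Remark 2.11]{sha3}) rather than re-deriving it from the correction factor in \cite[Corollary 5.11]{ant}, but the underlying argument is the same.
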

\begin{proof}Recall from Remark \ref{completion} that the non-completed $\sharp/\flat$ $p$-adic $L$-functions do not satisfy a direct functional equation. 
However, the $\lambda$- and $\mu$-invariants of the completed and non-completed $\sharp/\flat$ $p$-adic $L$-functions are the same, cf. \cite[Remark 2.11]{sha3}.
	\end{proof}

When $p$ is prime of good ordinary reduction for $E$, there is only one $p$-adic $L$-function, $L_p(E,\psi,T)$, coming from the unit root of $X^2 -a_pX + p$. Since proposition~\ref{lambda-invariance} and proposition~\ref{mu-invariance} are general statements about integral power series, they also apply to $L_p(E,\psi,T)$. Note that the invariance of $\mu$ was already known by Bianchi, albeit by using a different argument.

\begin{corollary}\label{invarianord} Let $p$ be a prime of ordinary reduction for $E$. Theorem \ref{invariance} also holds if we replace the $\sharp / \flat$ $p$-adic $L$-functions by their analogue in the ordinary case $L_p(E,\psi,T)$.
\end{corollary}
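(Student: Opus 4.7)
The plan is to mirror the proof of Theorem \ref{invariance} verbatim, using the standard functional equation for $L_p(E,\psi,T)$ in place of the one for $L_p^{\sharp/\flat}(E,\psi,T)$. First I would recall that in the good ordinary case $L_p(E,\psi,T)$ coincides with the Amice--V\'elu--Vi\v{s}ik function $L_\alpha(E,\psi,T)$ associated to the unit root $\alpha$ of $X^2-a_pX+p$. By \cite[Sections 5 and 17]{MTT}, this function satisfies
$$L_p(E,\psi,T) = -(1+T)^{-\log_{\gamma}(Q)}\overline{\psi}(-Q)c_Q L_p(E,\overline{\psi},\tfrac{1}{1+T}-1),$$
which is structurally identical to the functional equations of Theorem \ref{functional}.

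Next, I would check that the hypotheses of the appendix propositions are satisfied. The scalar $-\overline{\psi}(-Q)c_Q$ is a root of unity times $\pm 1$, hence a unit, and $(1+T)^{-\log_\gamma(Q)}$ is a unit in $\Lambda$. Thus $L_p(E,\psi,T)$ and $L_p(E,\overline{\psi},\tfrac{1}{1+T}-1)$ differ by a unit power series, exactly the situation analyzed in the supersingular case. Crucially, in the ordinary setting $L_p(E,\psi,T)\in \Lambda\otimes_{\Z_p} \cO$ for some finite extension $\cO$ of $\Z_p$ (this is the content of Mazur's integrality in the ordinary case), so $\lambda$ and $\mu$ invariants are defined in the usual way.

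With these ingredients in hand, Proposition \ref{lambda-invariance} and Proposition \ref{mu-invariance} apply to $L_p(E,\psi,T)$ with no change, yielding $\lambda_p=\overline{\lambda}_p$ and $\mu_p=\overline{\mu}_p$. The $\mu$-invariance here recovers Bianchi's result via our unified method, while the $\lambda$-invariance is a new consequence of this uniform approach. I do not expect any real obstacle, because the entire content of the corollary is the observation that the appendix propositions are stated at the level of integral power series related by multiplication by a unit, which is a condition that the ordinary functional equation supplies immediately.
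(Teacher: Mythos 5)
Your proposal is correct and follows essentially the same route as the paper: the paper's proof is a one-line observation that $L_p(E,\psi,T)$ is an integral power series satisfying the same functional equation as $L_p^{\sharp/\flat}$, so the proof of Theorem~\ref{invariance} (i.e.\ Propositions~\ref{lambda-invariance} and~\ref{mu-invariance}) applies verbatim. Your write-up merely makes explicit the identification with $L_\alpha$ for the unit root and the unit-multiplier check, which the paper leaves implicit.
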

\begin{proof}
Since $L_p(E,\psi,T)$ is an integral power series and satisfies the same functional equation as $L_p^{\sharp / \flat}$, the proof of theorem \ref{invariance} can be used in this case as well.
\end{proof}

\begin{remark}[Pollack] One can obtain the invariance of Iwasawa invariants in the ordinary case (Corollary \ref{invarianord}) and in the subcase $a_p=0$ and $p\equiv 3 \mod 4$ of the supersingular case (Theorem \ref{invariance}) by simply observing that the $\psi$ and $\overline{\psi}$ are Galois-conjugate characters, cf. the arguments in \cite[Lemma 6.7]{pollack}.
	\end{remark}

\section{The case $a_p=0$}

When $a_p=0$ and $\psi$ is the trivial character, the functions $L_p^{\sharp / \flat}$ are Pollack's $L^{\pm}$ functions multiplied by a unit, cf. \cite[Corollary 5.1]{ant}. See \cite{pollack} for the original definition of $L^{\pm}$.
The functional equations for the plus/minus $L$-functions are \cite[Section 5.1]{ant}:
\begin{align}
	L_p^{+}(E,T) &= -(1+T)^{-\log_{\gamma}(N)}c_N W^{+}(1+T)L_p^{+}(E,\frac{1}{1+T}-1), \\
	L_p^{-}(E,T) &= -(1+T)^{-\log_{\gamma}(N)}c_N W^{-}(1+T)L_p^{-}(E,\frac{1}{1+T}-1),
\end{align}
where $W^{\pm}(1+T)$ are units given by
\begin{align*}
	W^{+}(1+T) &= \prod_{j \geq 1}(1+T)^{-p^{2j-1}(p-1)}, \\
	W^{-}(1+T) &=\begin{cases} \prod_{j \geq 1}(1+T)^{-p^{2j-2}(p-1)} &\text{for $p$ odd}, \\ (1+T)^{-1}\prod_{j \geq 2}(1+T)^{-p^{2j-2}(p-1)} &\text{for $p=2$}. \end{cases}
\end{align*}

\begin{theorem}
	Let $E$ be an elliptic curve over $\Q$ of conductor $N$ and $p$ a prime such that $(p,N)=1$ and $a_p=0$. Write
	\begin{align*}
		L_p^{+}(E,T) &= a_{m_+}T^{m_{+}} + b_{m_+}T^{1+m_{+}} + \cdots \\
		L_p^{-}(E,T) &= a_{m_-}T^{m_-} + b_{m_-}T^{1+m_-} + \cdots
	\end{align*}
	where $m_{\pm}$ denotes the order of vanishing of $L_p^{\pm}(E,T)$ at $T=0$. We then have
	$$
	b_{m+} = -\frac{a_{m_+}}{2} \left( \log_{\gamma}(N) -\frac{p}{1+p}  +m_+\right)
	\text{and }
	b_{m_-} =-\frac{a_{m_-}}{2} \left( \log_{\gamma}(N) - \frac{1}{1+p} + m_- \right). 
	$$
\end{theorem}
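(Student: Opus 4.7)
The plan is to follow the strategy already used for Theorem~\ref{main}: differentiate the functional equation $m_\pm+1$ times, evaluate at $T=0$, and compare Taylor coefficients. The only new feature beyond Theorem~\ref{main} is the extra factor $W^\pm(1+T)$, which I would first collapse into an honest fractional power of $1+T$.

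Using the $p$-adically convergent geometric series $\sum_{j\geq 1}p^{2j-1} = p/(1-p^2)$ and $\sum_{j\geq 1}p^{2j-2} = 1/(1-p^2)$, one finds
\[
W^+(1+T) = (1+T)^{-(p-1)\cdot p/(1-p^2)} = (1+T)^{p/(p+1)},
\]
and, for $p$ odd, $W^-(1+T) = (1+T)^{1/(p+1)}$ by the analogous computation. For $p=2$, isolating the $j=1$ term in $W^-$ and summing $\sum_{j\geq 2}2^{2j-2} = -4/3$ yields $W^-(1+T) = (1+T)^{-1+4/3} = (1+T)^{1/3}$, again of the form $(1+T)^{1/(p+1)}$. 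Setting $\eta_+ := -\log_{\gamma}(N)+ p/(p+1)$ and $\eta_- := -\log_{\gamma}(N)+ 1/(p+1)$, the two functional equations acquire the uniform shape
\[
L_p^\pm(E,T) \;=\; -c_N\,(1+T)^{\eta_\pm}\, L_p^\pm\!\left(E,\tfrac{1}{1+T}-1\right).
\]

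Next, using $\tfrac{1}{1+T}-1 = -T/(1+T)$, one has the exact identity $\bigl(\tfrac{1}{1+T}-1\bigr)^k = (-1)^k T^k (1+T)^{-k}$, so I would expand $L_p^\pm(E,\tfrac{1}{1+T}-1)$ to order $T^{m_\pm+1}$, multiply by $-c_N(1+T)^{\eta_\pm}$, and then use $(1+T)^{\eta_\pm - m_\pm} = 1 + (\eta_\pm - m_\pm)T + O(T^2)$. Matching the $T^{m_\pm}$ coefficient with that of $L_p^\pm(E,T)$ forces $(-1)^{m_\pm} = -c_N$, which is the parity statement analogous to Theorem~\ref{vanishing}. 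Substituting this relation into the $T^{m_\pm+1}$ coefficient gives $2b_{m_\pm} = a_{m_\pm}(\eta_\pm - m_\pm)$, which rearranges to the stated formulas after plugging in the values of $\eta_\pm$.

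The only mildly delicate step is the closed-form simplification of $W^\pm(1+T)$, in particular the separate handling of $p=2$ for $W^-$; once that is in hand, everything else is routine coefficient bookkeeping structurally identical to the proof of Theorem~\ref{main}.
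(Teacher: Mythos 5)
Your proposal is correct and follows essentially the same route as the paper: both arguments reduce to comparing the $T^{m_\pm}$ and $T^{m_\pm+1}$ coefficients of the functional equation, with the only real input being the first-order behavior of $(1+T)^{-\log_{\gamma}(N)}W^{\pm}(1+T)$ at $T=0$, which you obtain by $p$-adically summing the geometric series in the exponents (your $\eta_\pm$ is exactly $(F^{\pm})'(0)$ in the paper's notation). Your separate treatment of $p=2$, giving exponent $-1+4/3=1/3$, matches the paper's observed coincidence $\frac{2^2-2-1}{2+1}=\frac{1}{2+1}$.
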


\begin{proof}
	We let $F^{\pm}(T) = (1+T)^{-\log_{\gamma}(N)}W^{\pm}(1+T)$. We follow the same arguments as in the proofs of Theorems \ref{vanishing} and \ref{main}. Differentiating $m_{+}$ times the functional equation (1) and $m_{-}$ times (2) on both sides and evaluating at $T=0$ gives $-c_N = (-1)^{m_{\pm}}$. Then, differentiating $m_{\pm} +1 $ times on both sides of the respective functional equation and evaluating at $T=0$ yields
	$$
	b_{m+} = \frac{-a_{m_+}}{2} \left( -(F^{+})^{\prime}(0) + m_+ \right)
	\text{ and }
	b_{m_-} = \frac{-a_{m_-}}{2} \left( -(F^{-})^{\prime}(0) + m_- \right).
	$$
	 But for odd $p$, $$(F^{+})^{\prime}(0) = -\log_{\gamma}(N) + \frac{p}{p+1} \text{ and } (F^{-})^{\prime}(0) = -\log_{\gamma}(N) + \frac{1}{p+1} \text{, while}$$ $$(F^{-})^{\prime}(0) = -\log_{\gamma}(N) + \frac{p^2-p-1}{p+1} \text{ for $p=2$}.$$ The coincidence $\frac{2^2-2-1}{2+1}=\frac{1}{2+1}$ shows that the result is the same for $p=2$!
\end{proof}

\newpage
\appendix
\section{Invariance of  Iwasawa invariants under functional equations}
This appendix contains results used in the proof of Theorem \ref{invariance}, which may be of general interest.
\begin{propn}[$\lambda$-invariance]\label{lambda-invariance}
	Let $f(T)$ and $g(T) \in \Lambda$ be related by
	$$g(T)=uf\left(\frac{1}{1+T}-1\right)$$
	for a unit $u\in \Lambda^\times$. Then we have that $\lambda(f(T))=\lambda(g(T))$.
\end{propn}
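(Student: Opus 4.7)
The plan is to reinterpret the $\lambda$-invariant as a zero count: by the Weierstrass preparation theorem, for nonzero $h \in \Lambda$, $\lambda(h)$ equals the number of zeros of $h$ in the open unit disk $U = \{T \in \C_p : |T|_p < 1\}$, counted with multiplicity. Under this reinterpretation, both the substitution $T \mapsto \tfrac{1}{1+T}-1$ and the unit factor $u$ can be dealt with geometrically.

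First, I would analyze the substitution $\phi(T) := \frac{1}{1+T} - 1 = \frac{-T}{1+T}$. For $|T|_p < 1$, the ultrametric inequality gives $|1+T|_p = 1$, so $|\phi(T)|_p = |T|_p < 1$; a direct computation shows $\phi \circ \phi = \mathrm{id}$. Hence $\phi$ is an involution of $U$. Since $\phi(T) \in T\Lambda$, the composite $f \circ \phi$ lies in $\Lambda$, and the map $\phi$ puts the zeros of $f \circ \phi$ in $U$ into a multiplicity-preserving bijection with the zeros of $f$ in $U$.

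Second, I would verify that a unit $u \in \Lambda^\times$ has no zeros on $U$. Writing $u = \sum_{i \geq 0} a_i T^i$ with $a_0 \in \Z_p^\times$, for any $T_0 \in U$ the terms $a_i T_0^i$ with $i \geq 1$ all satisfy $|a_i T_0^i|_p \leq |T_0|_p^i < 1 = |a_0|_p$, so the ultrametric inequality forces $|u(T_0)|_p = 1 \neq 0$. Combining the two steps, the zeros of $g = u \cdot (f \circ \phi)$ in $U$ are exactly the zeros of $f \circ \phi$ in $U$, which biject with the zeros of $f$ in $U$ under $\phi$; counting yields $\lambda(g) = \lambda(f)$. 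There is no genuine obstacle—both ingredients are elementary consequences of the ultrametric inequality on $\C_p$—and the only item worth recording carefully is that $\phi$ is defined and bijective on $U$ precisely because the boundary $|T|_p = 1$ is stable under $\phi$, cleanly separating zeros inside and outside the disk.
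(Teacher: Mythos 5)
Your proposal is correct and follows essentially the same route as the paper: both arguments identify $\lambda$ with the number of zeros in the open unit disk $U$ and show this count is preserved, since $T\mapsto \frac{1}{1+T}-1$ is a self-map of $U$ matching zeros of $f$ and $g$ and a unit of $\Lambda$ contributes no zeros there. The only difference is cosmetic: you package the multiplicity-preservation as a consequence of $\phi$ being an involution of $U$, whereas the paper verifies it explicitly by factoring $\frac{1}{1+T}-1-\zeta=\left(T+\frac{\zeta}{1+\zeta}\right)V_\zeta(T)$ with $V_\zeta(0)=-(1+\zeta)$ a unit.
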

\begin{proof} We must prove that $f(T)$ and $g(T)$ have the same number of zeroes in $U$ (counted with multiplicity). Let $\zeta$ be such a zero for $f(T)$, i.e. let $(T-\zeta)$ be a factor of $f(T)$ in $\overline{\Z}_p[T]$. Then $-\frac{\zeta}{1+\zeta}$ is a zero for $g(T)$ in $U$. We want to show that the factor $(T-\zeta)$ appears in $f(T)$ as often as $\left(T+\frac{\zeta}{1+\zeta}\right)$ does in $g(T)$.
	
	For this, consider the term
	$$\left(\frac{1}{1+T}-1-\zeta\right)=U(T)T-\zeta, $$ where
	$U(T)=\frac{(1+T)^{-1}-1}{T}\in\Lambda^\times$. Our term vanishes at $T=-\frac{\zeta}{1+\zeta}$, so 
	$$U(T)T-\zeta=\left(T+\frac{\zeta}{1+\zeta}\right)\times V_\zeta(T)$$ 
	for some $V_\zeta(T)\in\overline{\Z}_p[T]$. We introduced $U(T)$ in this proof because it helps us evaluate $V_\zeta(T)$ at $T=0$: $V_\zeta(0)=-(1+\zeta)\in\overline{\Z}_p^\times$, so $V_\zeta(T)$ is a unit. 
	
	 Under $T\mapsto \frac{1}{1+T}-1$, we thus have $(T-\zeta)^m \mapsto \left (T+\frac{\zeta}{1+\zeta}\right)^mV_\zeta^m(T)$. Thus,
	$$\{\text{zeros of $f(T)$ of multiplicity $n$ in $U$}\}\subset \{\text{zeros of $g(T)$ of multiplicity $\geq n$ in $U$}\}.$$
	Carrying out the argument with $f$ and $g$ reversed gives us a bijection between zeroes of $f$ and $g$ in $U$ that respects their multiplicities.
\end{proof}

\begin{lemma}[Unit invariance] Let $U(T)\in \Lambda^\times$. Then $U(\frac{1}{1+T}-1)\in \Lambda^\times$. \end{lemma}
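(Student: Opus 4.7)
The plan is to reduce this to the standard characterization of units in $\Lambda=\Z_p[[T]]$: a power series $f(T)\in\Lambda$ lies in $\Lambda^\times$ if and only if its constant term $f(0)$ lies in $\Z_p^\times$. So once we know that $U\!\left(\tfrac{1}{1+T}-1\right)$ is a well-defined element of $\Lambda$, we just need to compute its constant term.

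First I would check that the substitution $T\mapsto \tfrac{1}{1+T}-1$ is legitimate. Expanding gives $\tfrac{1}{1+T}-1=-T+T^2-T^3+\cdots$, an element of $\Lambda$ with zero constant term. Hence the substitution defines a continuous $\Z_p$-algebra endomorphism $\sigma\colon\Lambda\to\Lambda$, and in particular $\sigma(U)=U\!\left(\tfrac{1}{1+T}-1\right)$ genuinely lies in $\Lambda$.

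Next I would compute the constant term of $\sigma(U)$: since the substituted series has no constant term, $\sigma(U)(0)=U(0)$. By hypothesis $U\in\Lambda^\times$, so $U(0)\in\Z_p^\times$, and therefore $\sigma(U)(0)\in\Z_p^\times$, which by the Weierstrass criterion places $\sigma(U)$ in $\Lambda^\times$.

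There is really no obstacle here; the statement is essentially a one-line check. If one prefers a structural argument instead of invoking the constant-term criterion, one can observe that $\sigma$ is an involution (applying $T\mapsto \tfrac{1}{1+T}-1$ twice returns $T$, since setting $S=\tfrac{1}{1+T}-1$ yields $1+S=\tfrac{1}{1+T}$ and hence $\tfrac{1}{1+S}-1=T$), so $\sigma$ is an automorphism of $\Lambda$ and must therefore carry units to units.
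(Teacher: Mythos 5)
Your proof is correct and is essentially the paper's own argument (the paper's entire proof is ``Evaluation at $T=0$''), just spelled out: the substituted series has zero constant term, so the substitution is a well-defined endomorphism of $\Lambda$ and the constant term of $U\!\left(\frac{1}{1+T}-1\right)$ equals $U(0)\in\Z_p^\times$, which is the unit criterion. The observation that the substitution is an involution, hence an automorphism carrying units to units, is a nice bonus but not needed.
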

\begin{proof} Evaluation at $T=0$.
\end{proof}
\begin{propn}[$\mu$-invariance]\label{mu-invariance}
	Let $f(T)$ and $g(T) \in \Lambda$ be related by
	$$g(T)=uf\left(\frac{1}{1+T}-1\right)$$
	for a unit $u\in \Lambda^\times$. Then we have that $\mu(f(T))=\mu(g(T))$.
\end{propn}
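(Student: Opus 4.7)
The plan is to recognize the substitution $T \mapsto \frac{1}{1+T}-1$ as a $\Z_p$-algebra involution $\Phi$ of $\Lambda$, and then exploit the fact that any ring automorphism of $\Lambda$ fixing $p$ automatically preserves each ideal $p^n\Lambda$, hence preserves $\mu$-invariants. This sidesteps entirely the bookkeeping of zeros that was needed for the $\lambda$-invariance statement.

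First, I would check that $\Phi$ is well-defined and continuous on $\Lambda$. Since $\frac{1}{1+T}-1 = -T+T^2-T^3+\cdots$ lies in the maximal ideal $T\Lambda$, the assignment $h(T)\mapsto h\bigl(\frac{1}{1+T}-1\bigr)$ extends uniquely to a continuous $\Z_p$-algebra endomorphism of $\Lambda$. A direct substitution shows $\Phi\bigl(\Phi(T)\bigr)=T$, so $\Phi$ is an involution, hence an automorphism of $\Lambda$.

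Next, because $\Phi$ is a ring automorphism fixing $p$, we have $\Phi(p^n\Lambda)=p^n\Lambda$ for every $n\geq 0$. Using the characterization $\mu(h)=\max\{n\geq 0 : h\in p^n\Lambda\}$ coming from Weierstrass preparation, this immediately yields $\mu(\Phi(f))=\mu(f)$. Finally, multiplication by a unit does not change $\mu$-invariants (since $\mu$ is additive on products in $\Lambda$ and vanishes on $\Lambda^{\times}$, as $u\in\Lambda^{\times}$ forces $u(0)\in\Z_p^{\times}$, so $u\notin p\Lambda$), giving $\mu(g)=\mu(u\,\Phi(f))=\mu(f)$.

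I do not foresee a real obstacle: in contrast to the $\lambda$-invariance proof, where one had to produce a multiplicity-preserving bijection between the zeros of $f$ and $g$ in $U$, the $\mu$-invariance is essentially formal once one observes that $\Phi$ is an automorphism of $\Lambda$. The only step requiring a moment of care is verifying that $\Phi$ is well-defined and continuous, which rests on the elementary fact that $\frac{1}{1+T}-1$ lies in $T\Lambda$.
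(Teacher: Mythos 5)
Your proof is correct, but it takes a genuinely different route from the paper's. The paper proceeds concretely: it applies Weierstrass preparation to write $f(T)=p^{\mu}D(T)U(T)$, factors the distinguished polynomial $D$ into linear factors over $\overline{\Z}_p$, and then reuses the explicit unit factors $V_{\zeta}(T)$ computed in the proof of the $\lambda$-invariance proposition to see that the transformed product still lies in $\Lambda$ with $\mu$-invariant zero. You instead observe that $T\mapsto\frac{1}{1+T}-1$ defines a continuous $\Z_p$-algebra involution $\Phi$ of $\Lambda$ (well-defined because $\frac{1}{1+T}-1\in T\Lambda$, and an involution by direct computation), so that $\Phi$ preserves each ideal $p^{n}\Lambda$ and hence the quantity $\mu(h)=\max\{n: h\in p^{n}\Lambda\}$; combined with the fact that $\mu$ vanishes on units, this gives the claim. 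Your argument is more formal and arguably cleaner: it needs no factorization over $\overline{\Z}_p$ and no dependence on the $\lambda$-invariance computation (indeed, with the characterization $\mu(h)=\min_n v_p(a_n)$ it does not even require Weierstrass preparation), and it generalizes verbatim to any $\Z_p$-algebra automorphism of $\Lambda$. What the paper's approach buys in exchange is uniformity with its $\lambda$-invariance proof, since both statements are handled by the same explicit bookkeeping of the factors $\left(T+\frac{\zeta}{1+\zeta}\right)V_{\zeta}(T)$.
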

\begin{proof} Write $f(T)=p^\mu D(T)U(T)$ for an integer $\mu$, $D(T)$ distinguished, and $U(T)\in \Lambda^\times$. In $\overline{\Z}_p[[T]]$, we factor $f(T)=p^\mu\left(\prod_\zeta(T-\zeta)\right) U(T)$. From the proof of the $\lambda$-invariance lemma, we have $g(T)=up^\mu\left(\prod_\zeta(T+\frac{\zeta}{1+\zeta})\prod_\zeta V_\zeta(T)\right) U(\frac{1}{1+T}-1)$. The term $\prod_\zeta \left(T+\frac{\zeta}{1+\zeta}\right)V_\zeta(T)$ is in $\Lambda$ and has $\mu$-invariant zero, while $U(\frac{1}{1+T}-1)$ is a unit by the unit-invariance lemma above. The result follows.
\end{proof}

\bibliography{ssBianchi}{}
\bibliographystyle{plain}

\end{document}